\documentclass[10.5pt]{amsart}
\usepackage[centering]{geometry} 
\geometry{letterpaper} 
\usepackage{hyperref}
\usepackage{graphicx}
\usepackage{mathrsfs}
\linespread{1.3}
\usepackage{amssymb}
\usepackage{epstopdf}
\usepackage{amscd}
\usepackage{comment}
\usepackage{diagrams}
\usepackage[english]{babel}

\newtheorem{theorem}{Theorem}[section]

\newtheorem{proposition}[theorem]{Proposition}
\newtheorem{corollary}[theorem]{Corollary}

\newtheorem{lemma}[theorem]{Lemma}

\newtheorem{definition}[theorem]{Definition}

\newtheorem*{theorem-non}{Theorem}

\DeclareMathOperator{\Tors}{Tors}



\author{Fedor Bogomolov} 
\address {\tiny{Fedor Bogomolov \newline Courant Institute of Mathematical Sciences,
 New York University, 
 251 Mercer Street, 
 New York, NY 10012, USA\newline
 National Research University Higher School of Economics, Russian Federation,
AG Laboratory, HSE, 6 Usacheva str., Moscow, Russia, 119048}}
\email{bogomolo@cims.nyu.edu}

\author{Nikon Kurnosov}
\address {\tiny{Nikon Kurnosov\newline
 National Research University Higher School of Economics, Russian Federation,
AG Laboratory, HSE, 6 Usacheva str., Moscow, Russia, 119048 \newline
Department of Mathematics,
University of Georgia,
1023 D. W. Brooks Drive,
Athens, GA 30602, USA}}
\email{nikon.kurnosov@gmail.com}

\DeclareGraphicsRule{.tif}{png}{.png}{`convert #1 `dirname #1`/`basename #1 .tif`.png}
\title{Lagrangian fibrations for IHS fourfolds}
\begin{document}
\begin{abstract}

In this paper we study the Lagrangian fibrations for projective irreducible symplectic fourfolds and exclude the case of non-smooth base. Our method could be extended to the higher-dimensional cases.

\end{abstract}

\maketitle

\section{Introduction}

A variety $M$ of dimension $2n$ endowned with an everywhere non-degenerate closed two-form $\Omega \in H^0 (M, \Omega^2(M))$ such that $\Omega^n$ is the section of anticanonical class is called a holomorphic symplectic variety. Compact simply-connected holomorphic symplectic manifolds such that $H^{2,0}(M, \mathbf{C})=\mathbf{C} \left\langle \Omega \right\rangle $ are called irreducible holomorphic symplectic (IHS) manifold. Well-known that K\"ahler holomorphic symplectic manifolds are hyperk\"ahler and what by Bogomolov theorem \cite{B} any hyperk\"ahler manifold manifold is covered by product torus and irreducible ones. 

A subvariety $N \subseteq M$ of dimension $n$ is said to be Lagrangian if the
restriction of $\Omega$ on the smooth locus of $N$ is identically zero.

\begin{theorem}(Matsushita, \cite{Ma1}).
Let $\pi:\; M \rightarrow B$ be a surjective holomorphic map
from a hyperk\"ahler manifold $M$ to a base $B$, with $0<\dim B < \dim M$.
Then $\dim B = 1/2 \dim M$, and the fibers of $\pi$ are 
holomorphic Lagrangian (this means that the symplectic
form vanishes on the fibers). Moreover, $B$ is $\mathbf{Q}$-factorial klt Fano variety of dimension $n$ with Picard number 1.

\end{theorem}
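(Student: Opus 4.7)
The plan is to use the Beauville-Bogomolov-Fujiki (BBF) form $q$ on $H^{2}(M,\mathbf{R})$ to constrain pull-back classes from $B$, extract the dimensional equality and Lagrangian property from the resulting numerical identities, and then read off the structural properties of $B$ via the canonical bundle formula.

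First I would pick an ample class $\omega_B$ on $B$ and set $\alpha = \pi^{*}\omega_B \in H^{1,1}(M,\mathbf{R})$. Since $b := \dim B < 2n$, one has $\alpha^{2n} = \pi^{*}(\omega_B^{2n}) = 0$, so the Fujiki relation $\int_M \alpha^{2n} = c_M\, q(\alpha)^{n}$ forces $q(\alpha) = 0$. By polarization, the analogous vanishing of top intersections among pull-backs implies that the entire subspace $\pi^{*}\mathrm{NS}(B)_{\mathbf{R}} \subset \mathrm{NS}(M)_{\mathbf{R}}$ is $q$-isotropic. Since $q$ has signature $(1,\rho(M)-1)$ on $\mathrm{NS}(M)_{\mathbf{R}}$ and $\pi^{*}$ is injective (as $\pi$ is surjective with connected fibers over a normal base), the fact that maximal isotropic subspaces are one-dimensional gives $\rho(B) = 1$.

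To pin down $b = n$, I would polarize the Fujiki relation against a Kähler class $\beta$ on $M$. Expanding $\int_M (t\alpha + s\beta)^{2n} = c_M\, q(t\alpha + s\beta)^{n}$ and using $q(\alpha) = 0$ yields
\[
\int_M \alpha^{k}\beta^{2n-k} \;=\; C_{n,k}\, q(\alpha,\beta)^{k}\, q(\beta)^{n-k} \qquad (0 \le k \le n),
\]
with $C_{n,k} > 0$, while $\int_M \alpha^{k}\beta^{2n-k} = 0$ for $k > n$. Non-degeneracy of $q$ lets me choose $\beta$ with $q(\alpha,\beta) \ne 0$. If $b > n$ then $\int_M \alpha^{b}\beta^{2n-b} = 0$ contradicts the semipositivity of $\alpha^{b}$ paired with a Kähler power; if $b < n$ then $\alpha^{n} = \pi^{*}(\omega_B^{n}) = 0$ for degree reasons, contradicting non-vanishing of $\int_M \alpha^{n}\beta^{n}$. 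Hence $b = n$. The Lagrangian property then follows pointwise: at a smooth point $x$ of a smooth fiber $F$, $T_x F = \ker d\pi_x$ has dimension $n$, and the symplectic isomorphism $\Omega^{\sharp}: T_x M \xrightarrow{\sim} T_x^{*}M$ sends the $n$-dimensional subspace $(d\pi_x)^{*}(T^{*}_{\pi(x)}B) = \mathrm{ann}(T_x F)$ to the $n$-dimensional symplectic orthogonal $(T_x F)^{\perp_{\Omega}} \subset T_x M$, forcing $T_x F = (T_x F)^{\perp_{\Omega}}$.

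Finally, applying the canonical bundle formula to $\pi$ with $K_M \sim 0$ gives $K_B + \Delta \sim_{\mathbf{Q}} 0$ for an effective $\mathbf{Q}$-divisor $\Delta$ on $B$ encoding the discriminants of singular fibers; together with $\rho(B) = 1$ and non-triviality of $\Delta$, this yields $-K_B$ ample, so $B$ is Fano. The klt and $\mathbf{Q}$-factorial properties propagate from the smooth source $M$ to $B$ via Ambro-Kollár-Fujino type results on canonical bundle formulas with Calabi-Yau fibers. I expect this last step to be the main obstacle: the symplectic-geometric content is packaged cleanly in the Fujiki computation and the local identification of the symplectic orthogonal, whereas transferring the klt and $\mathbf{Q}$-factorial conditions to the possibly singular base requires invoking non-elementary MMP machinery beyond the BBF formalism.
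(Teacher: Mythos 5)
This statement is quoted in the paper as background (Matsushita's theorem, \cite{Ma1}) and is not proved there, so there is no internal proof to compare against; I can only measure your proposal against the known argument. Your use of the Fujiki relation is the right engine and matches the standard proof for the dimension and Picard-number claims: $q(\pi^{*}\omega_B)=0$ forces $\pi^{*}\mathrm{NS}(B)_{\mathbf{R}}$ to be $q$-isotropic, and the signature $(1,\rho(M)-1)$ gives $\rho(B)=1$; the polarized Fujiki identities do pin down $\dim B=n$. (One small repair: for the case $b<n$ you need $q(\alpha,\beta)\neq 0$ for a \emph{K\"ahler} $\beta$, which follows from $\int_M\alpha\wedge\beta^{2n-1}>0$ and the degree-one polarized Fujiki relation, not from mere non-degeneracy of $q$.)

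The genuine gap is in the Lagrangian step. Knowing that $T_xF=\ker d\pi_x$ and $(T_xF)^{\perp_{\Omega}}$ are both $n$-dimensional does not force them to coincide: in $(\mathbf{C}^4,\,dz_1\wedge dz_2+dz_3\wedge dz_4)$ the subspace $\langle\partial_1,\partial_2\rangle$ is $2$-dimensional and symplectic, with orthogonal $\langle\partial_3,\partial_4\rangle$. Nothing in your pointwise linear algebra rules this out for $\ker d\pi_x$; the isotropy of the fibers is a global fact, not a dimension count. The standard fix is again Hodge-theoretic: polarize the Fujiki relation on the $2n$ classes consisting of $n$ copies of $\alpha=\pi^{*}\omega_B$, one $\Omega$, one $\overline{\Omega}$, and $n-2$ copies of a K\"ahler class $\omega$. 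Since $q(\alpha,\alpha)=0$ and $q(\alpha,\Omega)=q(\alpha,\overline{\Omega})=0$ (the Hodge decomposition is $q$-orthogonal off $H^{2,0}\otimes H^{0,2}$), every monomial in the expansion vanishes, so $\int_M\alpha^{n}\wedge\Omega\wedge\overline{\Omega}\wedge\omega^{n-2}=0$; as $\alpha^{n}$ is a positive multiple of the fiber class this gives $\int_F\Omega|_F\wedge\overline{\Omega}|_F\wedge\omega|_F^{\,n-2}=0$, and pointwise semipositivity of that form forces $\Omega|_F=0$ on smooth fibers, hence on all fibers by continuity. Finally, you are right that the $\mathbf{Q}$-factorial klt Fano assertion is the hard part; it is not in Matsushita's original argument but comes from the canonical bundle formula and later work, and your sketch there is a statement of intent rather than a proof.
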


\begin{definition} Such a map is called {\bf a holomorphic Lagrangian fibration}.
\end{definition}

Consider a Lagrangian fibration from a complex projective irreducible symplectic manifold M to a normal
variety $B$. In all known examples of Lagrangian fibrations, the base $B$ is always
isomorphic to $\mathbf{C} P^n$. If one
assume that the base is smooth, then Hwang (\cite{Hwa}) proved that the base is always
a projective space. On the other hand, if the irreducible symplectic manifold $M$ is of $K3^{[n]}$ or generalized
Kummer deformation types, then the answer is also positive (see \cite{Ma2},\cite{Mar}, \cite{BM} and \cite{Yos}). However, it is still unclear if the base is always smooth, for instance for O'Grady sporadic examples. Thus the answer to this question is still unknown in the general setting. Quite recently the following theorem has been proved providing just two possibilities left

\begin{theorem}(Ou, \cite{Ou})
Let $\pi : M \rightarrow X$ be a Lagrangian fibration from a projective IHS fourfold $M$ to a normal surface $X$. Then either $X$ is either $\mathbf{C} P^2$ or $S^n(E_8)$, where the surface $S^n(E_8)$ is the unique Fano surface with exactly one singular point which is Du Val of type $E_8$, and two nodal rational curves in its anti-canonical system.
\end{theorem}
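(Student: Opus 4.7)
The plan is to combine Matsushita's theorem on the base, Hwang's theorem in the smooth case, and a case analysis of Du Val del Pezzo surfaces of Picard number one. By Matsushita's theorem $X$ is a $\mathbf{Q}$-factorial klt Fano surface with $\rho(X) = 1$. When $X$ is smooth, Hwang's theorem forces $X \cong \mathbf{C}P^2$, giving the first alternative. The substantive content is therefore to show that if $X$ is singular it must be $S^n(E_8)$.

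The first main step is to upgrade \emph{klt} to \emph{canonical}, i.e.\ to show the singularities of $X$ are Du Val. The mechanism is the identification $R^j\pi_*\mathcal{O}_M \cong \Omega^j_X$ over $X_{\mathrm{reg}}$ due to Matsushita, combined with the IHS data $K_M \cong \mathcal{O}_M$ and $h^{2,0}(M) = 1$. A local analysis of $\pi$ near a non-canonical point of $X$ should show that the discrepancy gets transferred to $K_M$ via the canonical bundle formula for Lagrangian fibrations, contradicting triviality. Hence $K_X$ is Cartier and $X$ is a Gorenstein log del Pezzo surface.

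Next, I would invoke the classification of Gorenstein del Pezzo surfaces of Picard rank one with Du Val singularities. This is a finite explicit list indexed by the ADE type of the singular locus and the degree $K_X^2 \in \{1,\dots,9\}$. For each candidate $X$ one asks whether a Lagrangian fibration $\pi \colon M \to X$ from an IHS fourfold can live over it. The tools are: (i) the local structure of $\pi$ over each singular point, which is a degeneration of abelian surfaces whose monodromy must be compatible with the local fundamental group of $X$ at that singularity; (ii) global numerical constraints on $M$, such as $\chi(M,\mathcal{O}_M) = 3$ and the Fujiki relation for the Beauville-Bogomolov-Fujiki form; and (iii) the Leray spectral sequence, which pins down $h^2(M, \mathbf{Q})$ in terms of $h^{\bullet}(X)$ and the cohomology of the singular fibers.

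The main obstacle, which is where the technical weight of the proof lies, is the final elimination step: ruling out every Gorenstein Picard-rank-one del Pezzo except $\mathbf{C}P^2$ and $S^n(E_8)$, including surfaces with singularities of types $A_n$, $D_n$, $E_6$, $E_7$ and any surface with more than one singular point. I expect the mechanism to be a numerical or monodromy mismatch: the dual graph of the minimal resolution of $X$ must be consistent with the discriminant of an abelian-surface fibration, and only the $E_8$ configuration, paired with the two nodal rational curves forced to appear in $|-K_X|$ on a rank-one Gorenstein del Pezzo with $K_X^2 = 1$, survives this compatibility. Once that is established, the characterising properties of $S^n(E_8)$ follow by a direct adjunction computation on the minimal resolution.
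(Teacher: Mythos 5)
First, a point of order: the paper under review does not prove this statement at all --- it is quoted as an external input from Ou's article \cite{Ou}, and the authors' own contribution begins only afterwards (eliminating the $S^n(E_8)$ alternative). So your proposal can only be compared with Ou's published argument, not with anything in this text. Measured against that, your opening moves are right (Matsushita gives a $\mathbf{Q}$-factorial klt Fano surface with $\rho(X)=1$; Hwang disposes of the smooth case), and you correctly locate the two pivots: upgrading klt to canonical, and then eliminating all but two entries from the classification of rank-one Gorenstein del Pezzo surfaces.

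The genuine gap is that both pivots are left as intentions rather than arguments, and the second --- which you yourself identify as carrying the technical weight --- is a placeholder: ``I expect the mechanism to be a numerical or monodromy mismatch'' does not rule out a single $A_n$, $D_n$, $E_6$ or $E_7$ surface, nor surfaces with several singular points. The invariants you propose (the Fujiki relation, $h^2(M,\mathbf{Q})$ via Leray) do not obviously close the argument, since $b_2(M)$ is not known a priori. The constraints that actually do the work are more specific. First, since $M$ is simply connected, the fibers have real codimension four, and $\pi$ has connected fibers, one gets $\pi_1(X_{\mathrm{reg}})=1$, which immediately removes every candidate whose smooth locus has nontrivial fundamental group; this is the same local-to-global $\pi_1$ mechanism the present paper exploits in its own Lemma \ref{surj}. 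Second, Matsushita's isomorphisms $R^i\pi_*\mathcal{O}_M\cong\Omega^{[i]}_X$ fed into the Leray spectral sequence together with $\chi(M,\mathcal{O}_M)=3$ force $\chi(X,\Omega^{[1]}_X)=-1$; computing this Euler characteristic on the minimal resolution converts it into an equation tying $K_X^2$ to the singularity configuration, and only $\mathbf{C} P^2$ and the degree-one $E_8$ surface (with its two nodal anticanonical members) survive. You name $\chi(M,\mathcal{O}_M)=3$ as an available tool but never connect it to the elimination. Similarly, for the Du Val step, ``a local analysis \dots should show that the discrepancy gets transferred to $K_M$'' is the right instinct, but as written it proves nothing: one must actually produce the contradiction with $K_M\cong\mathcal{O}_M$ from a non-canonical quotient singularity of the base. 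As it stands the proposal is a plausible roadmap with its two decisive steps missing.
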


In this paper we focuse in the case when $M$ has dimension 4 and prove that the latter case is impossible and further get Matsushita's Conjecture in that case:

\begin{theorem}\label{main}
Let $\pi:\; M \rightarrow X$ be a Lagrangian fibration from a projective IHS fourfold $M$ to a normal surface $X$. Then $X \simeq \mathbf C P^2$.
\end{theorem}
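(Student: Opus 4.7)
The plan is to proceed by contradiction, using Ou's theorem to reduce the problem to excluding $X\simeq S^n(E_8)$. Assume such a Lagrangian fibration $\pi:M\to X$ exists, and denote by $p\in X$ the unique $E_8$ Du~Val point, and by $C_1,C_2\in|-K_X|$ the two nodal rational anticanonical curves. My first move would be to exploit the anticanonical pencil $|-K_X|$: since $S^n(E_8)$ is a singular del Pezzo of degree one, $|-K_X|$ is a pencil two of whose members are the nodal rational curves $C_1, C_2$. Pulling back through $\pi$ and resolving the resulting indeterminacy yields an auxiliary morphism from a modification of $M$ to $\mathbf C P^1$, whose generic fibre is a threefold fibered in abelian surfaces over a smooth elliptic curve, and whose two distinguished singular fibres correspond to $C_1,C_2$.

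Next, I would analyse $\pi$ in a neighborhood of $p$. The analytic germ $(X,p)$ is $\mathbf C^2/\tilde A_5$, with $\tilde A_5$ the binary icosahedral group of order $120$; in particular the local fundamental group at $p$ is $\tilde A_5$. On the smooth locus of $X$ away from the discriminant $\Delta\subset X$, the fibration $\pi$ carries a polarised variation of weight-one Hodge structures of rank four, whose monodromy around $p$ factors as a homomorphism $\tilde A_5\to\mathrm{Sp}(4,\mathbf Z)$, uniquely determined up to conjugacy. This representation, the Matsushita identifications $R^i\pi_*\mathcal O_M\simeq\Omega_X^{[i]}$ (reflexive hulls), the identity $\chi(\mathcal O_M)=3$, and the topological identity $\chi_\mathrm{top}(M)=\sum_{x\in\Delta\cup\{p\}}\chi_\mathrm{top}(\pi^{-1}(x))$ (valid because abelian surfaces have vanishing Euler characteristic) must all be mutually compatible; the hope is to show that they are not. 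In particular, comparing the Euler-characteristic contribution enforced by the $E_8$ configuration on the minimal resolution $\tilde X \to X$ with the rigidity of Chern and Hodge numbers of IHS fourfolds should produce a contradiction, and this can be cross-checked against the auxiliary fibration to $\mathbf C P^1$ constructed above.

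The main obstacle I anticipate is the local structure at $p$: a formal-local model of a Lagrangian fibration over a Du~Val $E_8$ germ is not ruled out on abstract grounds, so the contradiction must come from a genuinely global constraint --- either an incompatibility between the $E_8$ chain of $(-2)$-curves on $\tilde X$ and the numerical constraints on IHS fourfolds forced by Matsushita's theorem, or a direct failure of a non-trivial monodromy $\tilde A_5\to\mathrm{Sp}(4,\mathbf Z)$ to extend to a globally polarised VHS realisable by a smooth IHS total space. Once $S^n(E_8)$ is excluded, Ou's theorem yields $X\simeq\mathbf C P^2$, as required.
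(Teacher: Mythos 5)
Your reduction via Ou's theorem to excluding $X\simeq S^n(E_8)$, and your identification of the local model $(X,p)\simeq \mathbf C^2/\tilde A_5$ with $\tilde A_5$ the binary icosahedral group, both match the paper. But the core of your argument --- the actual derivation of a contradiction --- is absent: you write that the listed invariants ``must all be mutually compatible; the hope is to show that they are not,'' and then in the final paragraph you acknowledge that you do not know which global constraint fails. That is a research plan, not a proof. Moreover, one of the tools you propose is misstated: for a Lagrangian fibration of a fourfold over a surface the discriminant is a divisor, so $\chi_{\mathrm{top}}(M)$ is not a finite sum of Euler characteristics of fibers over points of $\Delta\cup\{p\}$; the generic singular fiber sits over a curve and its contribution must be integrated over that curve. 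The monodromy representation $\tilde A_5\to\mathrm{Sp}(4,\mathbf Z)$ is also not ``uniquely determined up to conjugacy'' by anything you have set up, and the auxiliary pencil fibration over $\mathbf C P^1$ is never used to produce an inconsistency.

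The idea you are missing is the one the paper actually runs on: take the local smoothing cover $D\to D/\tilde A_5\simeq V(b_0)$ of the singular germ and pull the fibration back, so that $\tilde A_5$ acts \emph{freely} on the preimage $Y_0$ of the fiber over the singular point (freeness comes from the surjection $\pi_1(U(X_0))\twoheadrightarrow\pi_1(V(b_0)\setminus b_0)$ established first). The contradiction is then purely group-theoretic: a degenerate fiber of an abelian fibration has a finite group of components $T$ that is isotropic in $(\mathbf Q/\mathbf Z)^4$, hence has at most two generators, and $\tilde A_5$ admits no fixed-point-free affine action on such a $T$ (its image in $GL_2(\mathbf Z_p)$ is trivial for $p=2,3$ and its central extension does not lift for $p=5$); once a component is preserved, the standard two-dimensional representation of $\tilde A_5$ on $T_0(D)$ having no eigenvalue one forces a fixed point on that component. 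The multiple-fiber case is handled separately via the Hwang--Oguiso classification of characteristic cycles (or excluded outright in the projective case by a canonical-class computation). None of your proposed global invariants ($\chi(\mathcal O_M)=3$, Chern numbers, the VHS) appears to be needed, and without the covering trick it is unclear how your plan would close.
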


Moreover, our proof provides an approach to the general case.

\section{Lagrangian fibrations of IHS fourfolds}

In this section we prove the Main Theorem \ref{main}. Idea of the proof is the following: we consider fundamental group of fiber over singular point, which is abelian, and of its neighborhood, one have surjective map from it to fundamental group of complement of singular point to the base. The description of singular fibers were given in projective case by Matsushita \cite{Ma1} and in the general case by Hwang and Oguiso \cite{HO}. This provides us restrictions for the possible types of singularities.

\begin{lemma} \label{surj}
Let $\pi:\; X \rightarrow B$ be a Lagrangian fibration from a projective IHS fourfold $M$ to a normal surface $X$ with one Du Val singularity $b_0$. Let $U(X_0)$ be a small neighborhood of the fiber $X_0$, and $V(b_0)$ a small neighborhood of $b_0$.  Then there is a surjection of $\pi_1( U(X_0))$ onto $\pi_1(V(b_0)\setminus b_0)$.
\end{lemma}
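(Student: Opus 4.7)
The plan is to exhibit the surjection as the composition
\[
\pi_1\bigl(U(X_0)\bigr)\;\xleftarrow{\;\sim\;}\;\pi_1\bigl(U(X_0)\setminus X_0\bigr)\;\twoheadrightarrow\;\pi_1\bigl(V(b_0)\setminus b_0\bigr),
\]
where the left arrow is induced by inclusion and the right arrow by the restriction of~$\pi$ to the punctured neighborhoods. The two individual arrows are produced by somewhat independent arguments and then composed.

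First I would verify that the inclusion induces an isomorphism on~$\pi_1$. Since~$\pi$ is a Lagrangian fibration of the smooth fourfold $M$, Matsushita's theorem tells us it is equidimensional with fibers of complex dimension~$2$. Consequently $X_0\subset U(X_0)$ is a complex analytic subvariety of complex codimension~$2$, and a Whitney stratification of~$X_0$ consists of smooth complex submanifolds of complex codimension~$\geq 2$, i.e.\ real codimension $\geq 4$. General position then moves loops and null-homotopy discs off each stratum, yielding the classical fact that removing a complex analytic subset of complex codimension~$\geq 2$ from a complex manifold preserves the fundamental group.

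Next I would argue that the restriction
\[
\pi\colon U(X_0)\setminus X_0\longrightarrow V(b_0)\setminus b_0
\]
induces a surjection on~$\pi_1$. This map is proper and surjective, being obtained from the proper surjection $\pi\colon U(X_0)\to V(b_0)$ by restriction to the preimage of an open set. Moreover all its fibers are connected: the generic fiber of~$\pi$ is a smooth abelian surface, so by Stein factorization and the normality of the base every fiber of $\pi$ is connected. For a proper surjection with connected fibers between locally path-connected and semilocally simply connected spaces a standard covering-space argument gives surjectivity on~$\pi_1$: let $\widetilde V\to V(b_0)\setminus b_0$ be the connected cover corresponding to the image $H=\mathrm{Im}(\pi_*)$; the lifting criterion produces a proper lift $\widetilde \pi\colon U(X_0)\setminus X_0\to\widetilde V$, and since each connected fiber of~$\pi$ must map to a single point of the discrete fiber of the cover, the map $\widetilde V\to V(b_0)\setminus b_0$ admits a continuous section on its image and is therefore an isomorphism, forcing $H$ to be the whole group.

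Composing the two arrows yields the required surjection $\pi_1(U(X_0))\twoheadrightarrow\pi_1(V(b_0)\setminus b_0)$. The main subtlety I expect is in the first step: the complement-of-subvariety isomorphism crucially relies on the full equidimensionality of Lagrangian fibrations, because a divisorial component of~$X_0$ (real codimension~$2$) would contribute local monodromy generators to $\pi_1(U(X_0)\setminus X_0)$ and the left arrow would cease to be an isomorphism. Equidimensionality, together with the complex structure guaranteeing even real codimension of every stratum, is exactly what rules this out.
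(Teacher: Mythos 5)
Your proposal is correct and takes essentially the same route as the paper: the surjection is obtained by composing the codimension-two isomorphism $\pi_1(U(X_0)\setminus X_0)\cong\pi_1(U(X_0))$ (which rests on Matsushita's equidimensionality) with the surjection on $\pi_1$ induced by the proper map with connected fibers onto $V(b_0)\setminus b_0$, which the paper phrases as lifting circles using irreducibility of the generic fiber. Your covering-space justification of the second arrow is in fact more detailed than the paper's one-line version.
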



\proof

Since the map $X\to B$ is flat and  the base $B$ is normal, then all the fibers have codimemsion $2$. Let $b_0\in B$ be a singular point.
Then by Mumford the local fundamental group $\pi_1(V(b_0)\setminus b_0)$ is non-trivial. Consider the transversal disc $D$ to the fiber, then there is a map of it to the smooth point of a fiber over a singular point $b_0$. This map is surjective onto a neighborhood of $b_0$, therefore $\pi_1(V(b_0)\setminus b_0)$ is finite, and hence by classification is a subgroup of
$GL(2,\mathbf{C})$, and local neighborhood is isomorphic to $D/\pi_1(V(b_0)\setminus b_0)$.

Generic fiber is irreducible, thus any circle in $V(b_0) \setminus b_0$ can be lifted isomorphically
to the circle in $X \setminus X_0$. Then previous argument implies that $\pi_1( U(X_0))$ surjects onto
$\pi_1(V(b_0)\setminus b_0)$.
 \endproof

Now we will proof the Main Theorem \ref{main}. Let us resolve the local singularity by finite covering $D\to D/G \simeq V(b_0)$. Assume that the fiber over the zero which is unique preimage of $b_0$ has multiplicity one. Let $Y_0$ be the fiber after covering. It is non-ramified covering over singular point with the group $G$ acting freely on a fiber.

\proof {of \ref{main}}
      
Assume first that after lifting to the smooth disc $D$ we have a fibration without multiple fibers. And the group acts freely on the central fiber $Y_0$ over $0$, which projects to  $0 \in D$, $Y_0/ G \simeq X_0$.

Then we have an abelian fibration over smooth disc $D$ induced from $B$ under the map $D \to D /G$ isomorphic to $B$.      
      
In the case of $E_8$-type singularity the fundamental group $G =\pi_1(V(b_0)\setminus b_0)$ is isomorphic to a unique non-trivial central extension. 

$$1\to \mathbf{Z}/2\to \tilde{A_5} \to A_5 \to 1.$$

The latter has a representation on $\mathbf{C}^2$ with free action of $\mathbf{C}^2 \setminus 0$, i.e. representation on tangent space $T_0(D)$ of disc $D$ at 0. 

\begin{displaymath}
\begin{diagram}
Y& \rTo & D &\ni 0 \\
\dTo &  & \dTo^{\rho}  & \\
X& \rTo & D/\tilde A_5 & \ni b_0\\ ,
\end{diagram}
\end{displaymath}

If the fiber $Y_0$ has components of multiplicity one, then the union of such components $Y_0^1$ contains the commutative algebraic subgroup $A_0$, which consists of smooth points $Y_0^1$, and the closure of $A_0$ is $Y_0^1$. The action of $G$ maps $A_0$, $Y_0^1$ into themself. The action of $G$ on $A_0$ is affine, namely, for any $x,y$: $g(x-y)=gx-gy$. This expression does not depend on the choice of  $0$ in $A_0$. 

If any non-trivial element $g \in G$ maps a component of $Y_0^1$ into itself, then it has an invariant point in the closure $Y_0^1$, which contradicts the fact that the action of $G$ on $Y_0$ is free.

Indeed, the action of $G$ on an invariant vector fields in tangent space  of the component is dual via non-degenerate 2-form to the tangent space $T_0(D)$ of $D$ at 0. The action of $G$ at the latter is the standart 2-dimensional  representation $\tilde{A_5}$, and has a property that non-identity elements have eigenvalue one. For any $g \in G$ its action on the group components lifts to the action on univeral covering, which is $\mathbf{C}^2$ with natural affine structure. Since $g$ has no eigenvalue one, it has a atable point on the universal covering of the corresponding connected component of the group $A_0$, which is the linear space. Hence, $g$ also has an invariant point on $Y_0$, providing a contradiction with freeness of the action. 

Therefore, if $Y_0^1$ is non-trivial, we have to consider an affine action on the group of components.

Let us first consider the case if the multiplicity of some components of the fiber is one. Hence $Y_0^1$ is non-trivial.

\begin{proposition}
The action of $\tilde A_5$ on the  the fiber $Y_0^1$
  can not be free.
\end{proposition}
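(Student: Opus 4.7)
\proof
\textbf{Proof plan.} Suppose for contradiction that $\tilde A_5$ acts freely on $Y_0^1$.

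\emph{Step 1.} Apply the affine/fixed-point argument of the preceding paragraph. The linear part of the action of any non-identity $g \in \tilde A_5$ comes from the standard $2$-dimensional representation, which has no eigenvalue $1$; hence if $g$ preserved a component it would necessarily have a fixed point in the closure of that component. Therefore no non-trivial element of $\tilde A_5$ stabilises a component of $Y_0^1$, and $\tilde A_5$ acts freely on the finite set of components of $Y_0^1$, which we identify with the finite abelian component group $Q := \pi_0(A_0)$. In particular $|Q|$ is a positive multiple of $|\tilde A_5|=120$.

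\emph{Step 2.} The affine action on $A_0$ descends to a free affine action on $Q$, i.e.\ a homomorphism $\tilde A_5 \to \Aff(Q) = Q \rtimes \Aut(Q)$, $g \mapsto (\bar t(g), \bar\phi(g))$. Because $\tilde A_5$ is perfect and $Q$ is abelian, $\bar\phi$ cannot be trivial: otherwise $\bar t$ would be a non-trivial homomorphism from $\tilde A_5$ to the abelian group $Q$, which is impossible. Since the only proper non-trivial normal subgroup of $\tilde A_5$ is its centre $\langle \sigma \rangle$, the image $\bar\phi(\tilde A_5) \subseteq \Aut(Q)$ must contain a subgroup isomorphic to $A_5$.

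\emph{Step 3.} Produce a contradiction from the geometry of $Y \to D$. The component group $Q$ is a quotient of the homology lattice $H_1 \cong \mathbf{Z}^4$ of the nearby (abelian) fibre modulo the sublattice of vanishing cycles, and the induced $\tilde A_5$-action on $Q$ comes from an integral action on this lattice preserving the symplectic form. Using the local classification of singular fibres of Lagrangian fibrations over a smooth base (Hwang--Oguiso, applied after passing to the smooth cover $D \to D/\tilde A_5$), one shows that no such integral action admits a copy of $A_5$ inside the resulting automorphism group of $Q$, yielding the desired contradiction.

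\emph{Main obstacle.} The crux is Step 3: translating the abstract conclusion $A_5 \hookrightarrow \Aut(Q)$ into a genuine geometric impossibility. This requires a precise description of $Q$ and of $\Aut(Q)$ in terms of monodromy / vanishing-cycle data at $0 \in D$, together with a local classification of singular fibres strong enough to rule out any integral $A_5$-action compatible with the Lagrangian structure. Everything else in the argument is essentially formal; the real work is in this final comparison with the classification.
\endproof
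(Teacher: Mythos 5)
There is a genuine gap: your Step 3 is exactly the content of the proposition, and you have left it as an unproven assertion rather than an argument. The paper closes this step with a specific structural input that your proposal never identifies: the group $T$ of components is a finite subgroup of the torsion $(\mathbf{Q}/\mathbf{Z})^4$ of the generic abelian fiber which is \emph{isotropic} for the skew-symmetric Euler (Weil) pairing, and therefore has at most \emph{two} generators. This rank bound is what makes the problem finite: writing $T=\bigoplus_p T_p$, the affine automorphism group of each $T_p$ is an extension of a subgroup of $\GL_2(\mathbf{Z}_p)$ by a $p$-group; for $p$ coprime to $|\tilde A_5|=120$ the action linearizes and one restricts to the fixed point, for $p=2,3$ every homomorphism $\tilde A_5\to \GL_2(\mathbf{Z}_p)$ is trivial, and for $p=5$ the central extension $\tilde A_5$ of $A_5\subset S_5=\GL_2(\mathbf{Z}_5)$ does not lift, so the action on $T_5$ linearizes and fixes the origin --- contradicting freeness. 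Without the two-generator bound, the statement you defer to Step 3 is not obtainable by soft means and is false as a purely group-theoretic claim about quotients of $\mathbf{Z}^4$: four-generated finite abelian groups such as $(\mathbf{Z}/2)^4\cong\mathbf{F}_4^2$ do carry linear $A_5\cong\SL_2(\mathbf{F}_4)$ actions, and $\GL_2(\mathbf{Z}/11)$ already contains $\tilde A_5=\SL_2(\mathbf{F}_5)$. So ``the resulting automorphism group of $Q$ contains no copy of $A_5$'' cannot be the mechanism; the actual mechanism is the isotropy constraint plus the prime-by-prime fixed-point analysis.

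Your Steps 1 and 2 are consistent with the paper (freeness on the component set via the no-eigenvalue-one fixed-point argument, and perfectness of $\tilde A_5$ forcing a non-trivial linear part of the affine action), but these are the formal preliminaries. To repair the proposal you would need to (i) observe that $T$ is isotropic for the skew-symmetric pairing on the torsion of the abelian fiber, hence $T_p$ has at most two generators for every $p$, and (ii) carry out the reduction to $p=2,3,5$ and the non-lifting argument at $p=5$. As written, the proof is incomplete at its decisive point.
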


\begin{proof}


The group $T$ of components is a finite subgroup of a generic abelian fiber $A^2$ which is $\mathbb{(\mathbf{Q}/\mathbf{Z})}^{4}$.


Note that the Euler pairing is trivial on $T$, i.e $T$ is isotropic subgroup
with respect to this skew-symmetric pairing.
In particular $T$ has at most two generators.
The group $G$ acts on $T$
affinely,i.e $g$ acts by an automorphism on the
elements $x-x_0$, i.e degree $0$ combinations.
Thus
 $$g(x)= lg(x) + b_g $$ 
for a selected $0$-element
in $T$.

The group $T$ is a direct sum of a finite $p$-groups $T_{p}$ of elements
of order a power of $p$. Thus we have an embedding into affine action on the product of the groups $T_p$. Each of those groups by assumption has at most two generators.

The group of affine automorphism of $T_p$ is an extension of a subgroup of $Gl_2(\mathbf{Z}_p)$ by a $p$-group.
Note that we have an equivariant $\tilde{A_5}$-action with respect to the
projection to $T\to T_{p}$. If $p \neq 2,3,5$ and hence coprime to the order of $\tilde{A_5}$, then the action is linear. Therefore we can restrict our analysis to the preimage of stable point in $T_p$. Thus, we can reduce the gemeral case to $p=2,3,5$. Note that any map of $\tilde{A_5}$ into $Gl_2(\mathbf{Z}_p)$, where $p=2,3$ is trivial. Hence, we can reduce to the case $p=5$.  In case of $p=5$ we know that the a linear group projects surjectively to $GL_2(\mathbf{Z}_5) = S_5$, which contains $A_5$. But its central extension can not be lifted to $GL_2(\mathbf{Z}_5)$. Note that this subgroup $A_5$ of $S_5$ has only linear lifting in the corresponding affine group acting on $T_5$.
\end{proof}

If the multiplicity is not one, then we can use results of Hwang-Oguiso. Assume now that the fiber over $0$ is multiple.
  In \cite{HO} such fibers are classified using the results
  from \cite{HO1}.
  Namely we summarize them in the following corollary of the \cite[Theorem 1.1]{HO} for the case of $dim M =4$

\begin{corollary} Let $p : M\to B^2$ be an abelian fibration with
  multiple fibers along the divisor $D\subset B$ in a smooth ball $B^2$.
Then
  the normalization $V_x$ of the fiber over generic point $x$ of $D$ is  a
fibration over
   an elliptic curve $E_x, f_x: V_x\to E_x$.The map of the normal bundle
$N(V)\to p^*T(B^2)$ has rank
   $1$ at a smooth point of $V_x$
   and it's kernel is contained in the tangent bundle of the
one-dimensional fiber
   of $f_x$.
   The fiber of $f_x$ is called a characteristic cycle and it is an
elliptic
   curve or it's Kodaira degeneration of the one of the following types:

\begin{itemize}

 \item[1)] elliptic curve $E_0$,

 \item[2)]  Kodaira type II,

 \item[3)]  Kodaira type $E_1$ type IV, 

 \item[4)]  Kodaira type $E_0$ type III,

 \item[5)]  $I_{2m}$ - ring, $I^*_0$- elliptic curve , $IV, IV^*$.
\end{itemize}  
\end{corollary}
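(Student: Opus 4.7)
The plan is to deduce this statement as a specialization of Hwang--Oguiso's Theorem~1.1 in \cite{HO} to the case $\dim M = 4$, reading off what it says geometrically when the base is a surface and the generic fiber is an abelian surface. Their general theorem already produces an intrinsic fibration on the normalization of each fiber over the multiple divisor, described via the local monodromy around $D$, so the work is to translate this into the concrete normal-bundle rank assertion and into a finite Kodaira list.

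First I would set up the geometry at a generic point $x \in D$. The normalization $V_x$ is a smooth projective surface inheriting an abelian-torsor structure from the translation action of the nearby regular fibers. The characteristic fibration $f_x \colon V_x \to E_x$ of \cite{HO} is defined as the quotient by the maximal connected algebraic subgroup on which the local monodromy acts trivially; since $\dim V_x = 2$, the image $E_x$ must be one-dimensional and hence an elliptic curve, so each fiber of $f_x$ is a connected projective curve of arithmetic genus $1$. The normal-bundle statement then comes for free from \cite[Theorem 1.1]{HO} specialized to $\dim B = 2$: the rank drops from $2$ to $1$ because the multiple fiber varies non-trivially only in the direction transverse to $D$, and the kernel is forced to coincide with the direction along which $V_x$ is locally translation-invariant, namely the tangent line to the fiber of $f_x$.

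The remaining and most substantial step is the classification of admissible characteristic cycles. Any such cycle is a connected projective curve of arithmetic genus $1$ admitting a translation action by a finite cyclic group whose order is dictated by the multiplicity of the fiber; Kodaira's list of singular elliptic fibers gives the raw candidates. The hard part is to match each Kodaira type against the finite-order local monodromy representations computed in \cite{HO1}, and to rule out all types which cannot carry an automorphism of the required order preserving the group-scheme structure. This is exactly what reduces Kodaira's full list to the items (1)--(5) stated, and the case-by-case verification of compatibility is the main obstacle; it is routine but unavoidable, and is essentially the content of \cite{HO1} that is being specialized here.
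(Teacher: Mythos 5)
Your proposal matches the paper's treatment: the corollary is given there with no independent proof, simply as a summary/specialization of \cite[Theorem 1.1]{HO} (which in turn relies on \cite{HO1}) to the case $\dim M = 4$, exactly as you describe. Your additional glosses on the characteristic fibration and the normal-bundle rank are consistent with what the cited theorem provides, so no further argument is needed beyond the citation.
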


  \begin{lemma} The group $\tilde A_5$ can not act freely on $X_0$ if it is
  a multiple fiber.
\end{lemma}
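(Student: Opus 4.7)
The plan is to apply the Hwang--Oguiso structure corollary just cited to the multiple fiber $Y_0$ and to analyze the $\tilde A_5$-action through the resulting elliptic fibration. The normalization $V_0 \to Y_0$ carries a $\tilde A_5$-equivariant fibration $f_0\colon V_0 \to E_0$ onto an elliptic curve, since normalization is functorial and the fibers of $f_0$ (the characteristic cycles) are intrinsically defined via the normal bundle of the fiber in $M$.

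The first substantive step is to show that the induced $\tilde A_5$-action on $E_0$ is trivial. I would use two general facts about the binary icosahedral group: first, $\tilde A_5 \cong \SL(2,\mathbf{F}_5)$ is perfect, so every homomorphism to an abelian group --- in particular into the translation subgroup $E_0 \subset \mathrm{Bihol}(E_0)$ --- is trivial; second, the rotational quotient $\Aut(E_0,0)$ has order at most $6$, whereas $A_5 = \tilde A_5 / Z(\tilde A_5)$ is simple of order $60$ and does not embed in $\Aut(E_0,0)$. Combined with perfectness, this forces the entire image of $\tilde A_5$ in $\mathrm{Bihol}(E_0)$ to be trivial, so $\tilde A_5$ preserves every characteristic cycle $C_e := f_0^{-1}(e)$.

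I would then go through the cases of the classification. In case (1), when $C_e$ is a smooth elliptic curve, any free action of a finite group on it is by translations, so the acting group must be abelian; this contradicts the non-abelianness of $\tilde A_5$. In cases (2)--(5) --- cuspidal and nodal rational curves, and Kodaira chains of types $II$, $III$, $IV$, $IV^*$, $I_{2m}$, $I_0^*$ --- it suffices to observe that the central involution $\epsilon \in Z(\tilde A_5)$ must have a fixed point on $C_e$: on a cuspidal rational curve the cusp is the unique singular point and is automatically fixed; on a nodal curve the node is fixed; and on a chain of rational curves any involution either fixes a node of the chain or restricts to an involution on a $\mathbf{P}^1$-component, which must have two fixed points. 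Lifting such a fixed point back to $Y_0$ via the equivariant normalization contradicts the assumed freeness of the action on $Y_0$.

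The principal difficulty is the equivariant extension of the Hwang--Oguiso structure to the central fiber itself: the cited corollary supplies the characteristic elliptic fibration at a \emph{generic} point of the divisor of multiple fibers, and one must argue that $f_0$ is well-defined at $0$ and that the $\tilde A_5$-action on $Y$ descends compatibly to this central fibration, perhaps by taking a $\tilde A_5$-invariant limit of the generic structure or by invoking flatness. A secondary issue is the case-by-case verification of fixed points on each characteristic cycle type in the list; this is routine but must be done individually.
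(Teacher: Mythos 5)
Your overall strategy coincides with the paper's: first kill the $\tilde A_5$-action on the base elliptic curve of the characteristic fibration (your perfectness-plus-$|\Aut(E,0)|\le 6$ argument is a welcome elaboration of the paper's bare assertion that neither $\tilde A_5$ nor $A_5$ acts effectively on an elliptic curve), then find a fixed point on an invariant characteristic cycle. Your treatment of the smooth elliptic curve case (free implies translations implies abelian) and of the tree-like degenerations $II$, $III$, $IV$, $IV^*$, $I_0^*$ agrees with the paper and is correct: when the dual graph is a tree, every automorphism preserves a component or a node, and an automorphism of $\mathbf{P}^1$ has a fixed point.

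There is, however, a genuine gap in your handling of $I_{2m}$. You list it among the cases where ``the central involution must have a fixed point,'' treating it as a chain; but $I_{2m}$ is a \emph{cycle} of $2m$ rational curves, and its dual graph admits free rotations. Concretely, the smooth locus is the group $\mathbf{C}^*\times\mathbf{Z}/2m$, and translation by the order-two element $(1,m)$ extends to a fixed-point-free involution of all of $I_{2m}$ (it permutes the $2m$ nodes without fixed points), so the central element of $\tilde A_5$ can a priori act freely here and your argument does not close this case. This is precisely why the paper isolates $I_{2m}$ and argues differently: freeness forces every nontrivial $g\in\tilde A_5$ to act nontrivially on the component group $\mathbf{Z}/2m$ (otherwise $g$ preserves some component and has a fixed point on its closure), i.e.\ forces an embedding of $\tilde A_5$ into the affine automorphism group of $\mathbf{Z}/2m$ --- a solvable, dihedral-type group --- which is impossible since $\tilde A_5$ is perfect and admits no nontrivial homomorphism to such a group. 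You would need to add this (or an equivalent) argument. Your flagged worry about transporting the Hwang--Oguiso structure from the generic point of the discriminant to the central fiber is legitimate, but the paper does not address it either, so I would not count it against you.
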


  \begin{proof}
  
 Consider the action of $\tilde A_5$ on elliptic curve $E_x$.
  induced from the action of $\tilde A_5$ on $X_0$.
  Then note that neither $\tilde{A_5}$ nor $A_5$  has an effective action on an elliptic curve. Hence it is trivial and, therefore, $\tilde A_5$ maps fibers of $f_x$
  into themselves.

  Note that the characteristic cycle can not be a smooth elliptic
  curve by the argument showing the absence of non-trivial action of $\tilde{A_5}$ on elliptic curves. If the characteristic cycle is a tree af a rational curves as in the all above cases apart $I_{2m}$, then any element of $\tilde{A_5}$ has an invariant rational component and hence has the stable point on such a component contradicting the fact that the action has to be free on the fiber $X_0$.

  Thus the only case to consider is when the fiber is the cycle
  of rational curves of type $I_{2m}$. In this case the smooth part of fiber is the product of $\mathbf{C}^* \times \mathbf{Z}_{2m}$. The action of $\tilde A_5$ is the affine with the respect to this group action. In particular, we must obtain an embedding of $\tilde A_5$ to the affine automorphisms of $\mathbf{Z}_{2m}$, which is the dihedral group. Since there are no non-trivial maps from $\tilde A_5$ to dihedral group, we obtain a contradiction.

\end{proof}

  This completes the proof of our main result.
\endproof

Note that in fact the examples of local symplectic fibrations with multiple
fibers do not occur in projective case and hence we have alternative proof with the following lemma.

\begin{lemma} Let $X$ be a smooth projective variety with trivial
canonical class and $p : X \to B$ be flat surjective map
with an abelian variety with $X_t$ as a generic fiber and $B$ is
normal.
Then there are no multiple fibers over smooth points of
$B$.
\end{lemma}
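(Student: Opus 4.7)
The plan is to reduce to an abelian fibration over a smooth curve and apply Kodaira's canonical bundle formula. Assume, for contradiction, that $p\colon X\to B$ has a multiple fiber of multiplicity $m>1$ over a smooth point $b_0\in B$; by flatness the locus of multiple fibers is an effective divisor $D\subset B$ passing through $b_0$.

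First I would cut $B$ by $(\dim B - 1)$ general hyperplane sections through $b_0$ to obtain a smooth projective curve $C\subset B^{\mathrm{sm}}$ meeting $D$ transversely at $b_0$. A Bertini argument ensures that $C$ is smooth and the preimage $X_C := p^{-1}(C) = X\times_B C$ is a smooth projective variety; the map $p_C := p|_{X_C}\colon X_C\to C$ is flat with abelian generic fiber and inherits a multiple fiber of multiplicity $m$ at $b_0$. Iterated adjunction on the complete intersection $X_C = p^*H_1\cap\cdots\cap p^*H_{n-1}$, combined with $K_X=0$, gives
\[
K_{X_C} \;=\; \Bigl(\sum_i p^*H_i\Bigr)\Big|_{X_C} \;=\; p_C^*\bigl(\textstyle\sum_i H_i|_C\bigr),
\]
so $K_{X_C}$ lies in the integer pullback subgroup $p_C^*\mathrm{Pic}(C)\subseteq\mathrm{Pic}(X_C)$.

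On the other hand, the canonical bundle formula for a smooth abelian fibration over a smooth curve — Kodaira's in the elliptic case, extended to higher-dimensional fibers by Ueno and Kawamata — reads
\[
K_{X_C}\;\sim\;p_C^*(K_C + L)\;+\;\sum_i (m_i-1)F_i,
\]
with $m_iF_i = p_C^*(q_i)$ the multiple fibers of $p_C$. Subtracting the two expressions shows $\sum_i(m_i-1)F_i\in p_C^*\mathrm{Pic}(C)$. Each reduced multiple-fiber class $F_i$ has order exactly $m_i$ in $\mathrm{Pic}(X_C)/p_C^*\mathrm{Pic}(C)$ (the relation $m_iF_i = p_C^*(q_i)$ is primitive since $F_i$ is a prime divisor with coefficient one, whereas $p_C^*$ of an integer divisor on $C$ produces only $\mathbb Z$-combinations of full scheme-theoretic fibers), and classes of distinct $F_i$ give independent generators of the torsion part of this quotient. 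The relation then forces $(m_i-1)\equiv 0\pmod{m_i}$ for every $i$, i.e.\ $m_i = 1$, contradicting $m>1$.

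The main obstacle is the first step: controlling the Bertini argument carefully at the multiple fiber so that $X_C$ remains smooth and the multiplicity at $b_0$ is preserved. One also needs to justify the higher-dimensional canonical bundle formula in the required form. A cleaner alternative would be a cyclic base change $B'\to B$ of degree $m$ ramified along $D$: then the normalization $\tilde X$ of $X\times_B B'$ would carry $K_{\tilde X}$ supported on the ramification divisor by Hurwitz, while the new fibration $\tilde X\to B'$ has no multiple fiber there — yielding an analogous contradiction with the Calabi--Yau property $K_X=0$.
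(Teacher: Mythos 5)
Your proposal follows essentially the same route as the paper's own proof: restrict the fibration to a general curve $C$ in the base, use adjunction together with $K_X=0$ to conclude that $K_{X_C}$ lies in $p_C^*\Pic(C)$, and then contradict this with the $(m_i-1)F_i$ contribution of multiple fibers in the canonical bundle formula, using that $m_iF_i = p_C^*(q_i)$ is the only relation. You are somewhat more explicit than the paper about the Bertini step and about why $F_i$ has order exactly $m_i$ modulo pullbacks, but the argument is the same.
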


\begin{proof}

Consider the map of the tangent bundle $T_X$ to $p^* T_B, p_* T_X
\to p^* T_B$. Then
multiple fibers are appear constitute a discriminant divisor $E$ in
$X$.
Consider a general smooth curve $C$ intersecting $E$ transversally.
and the restriction of  the  fibration $X$ upon $C$ which we denote
by $X_C$. Assume that $S_C \subset C$ corresponding to multiple fibers.
of $p_C: X_C \to C$. Thus let $m_p$ be the multiplicity of the fiber at
$p\in C$.

Then the canonical class of $X_S$ is a 

$$p_C^* K(C) +\Sigma_{p\in S}
(m_p-1)X_p= K(X_C),$$

where the summation goes over all multiple fibers.

Note that we have equivalence in the Picard group $m_p X_p = X_t$
and this is the only relation between divisors $X_p$ for different
points $p$. In particular in the presence of multiple fibers
$K(X_C)$ is not induced from the base.

On the other hand for for a generic smooth $C$ the determinant
of normal bundle $det N(C)$ to $C$ is induced from $C$ and hence
the restriction 

$$K(X)\restriction_{X_C}= 0 = K(X_C)+ p_C^*\det N(C)$$

 and hence
$ K(X_C)= - p_C^*\det N(C)$ which provides a contradiction
since $p_C^*\det N(C)$ is induced from $C$.
\end{proof}

\section{Existence of section}

In this section we study the existence of topological and algebraic sections for abelian fibration. Let $X \to С$ be a fibration for smooth compact $X$ to smooth 2-dimensional Riemann surface $C$.

The following general topological lemma provides the criteria of existence for topological sections for the fibrations over smooth 2-dimensional  Riemann surface.

  Consider first more general case.
  Namely let $p : X\to C$ be a surjective map
  of smooth compact orientable manifold onto a two-dimensional
  Riemannian surface.
  Assume that the following properties are satisfied:

  \begin{itemize}

   \item[1)] the map is smooth fibration with generic connected fiber $X_t$ over $C\setminus S$, where $S$ is the finite number of points
   \item[2)] $\pi_1(X_t)$ is abelian
   \item[3)] there is an element in $H_2(X,\mathbf{Z})$ which intersects generic fiber $X_t$ with degree $1$
  \end{itemize}

\begin{lemma} Consider a subgroup $F\subset H_2(X,\mathbf{Z})$ which is generated by the
image from $H_2(X_\tau)$ in $\tau \in C$. Then any element $h\in H_2(X,\mathbf{Z})/F$ which intersect
generic fiber $X_t$ by degree $1$ has a representative which is a
topological section.
\end{lemma}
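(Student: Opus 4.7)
The plan is to combine obstruction theory for the fibration $p:X\to C$ with the Leray filtration on $H_2(X;\mathbf{Z})$: first produce any topological section $s_0$, then show that by twisting $s_0$ along $1$-cycles in $C$ one can realize every degree-$1$ homology class modulo $F$. To build $s_0$, I would construct a section over $C\setminus S$ using obstruction theory and extend over the points of $S$ by properness of $p$. If $S\neq\emptyset$, then $C\setminus S$ is homotopy-equivalent to a $1$-complex, so no obstruction arises; if $S=\emptyset$, then $p$ is a fiber bundle globally and the primary obstruction lies in $H^2(C;\pi_1(X_t))$ with local coefficients, well-defined by assumption $(2)$. Hypothesis $(3)$ forces this obstruction to vanish: $h\cdot[X_t]=1$ means $p_*:H_2(X)\to H_2(C)=\mathbf{Z}$ is surjective, which in the Serre spectral sequence is equivalent to the vanishing of $d_2:E^2_{2,0}\to E^2_{0,1}$, i.e.\ the primary obstruction class.

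Fix such an $s_0$. The class $h-[s_0(C)]$ lies in $\ker p_*$, and the Serre spectral sequence presents this kernel as an extension of $H_1(C;\mathcal{H}_1)$ by the image of $H_2(X_t)\to H_2(X)$, where $\mathcal{H}_1$ denotes the local system with stalk $H_1(X_t)$. The latter image lies in $F$, so modulo $F$ the class $h-[s_0(C)]$ is captured by a well-defined $\alpha\in H_1(C;\mathcal{H}_1)$. The task reduces to realizing $\alpha$ as the graded difference $[s(C)]-[s_0(C)]$ for some section $s$. Writing $\alpha=\sum_i\gamma_i\otimes\ell_i$ with $\gamma_i$ a $1$-cycle in $C\setminus S$ and $\ell_i$ a loop in the fiber, I would thicken each $\gamma_i$ to an annular neighborhood $A_i$ and modify $s_0|_{A_i}$ by fiberwise translation along $\ell_i$, interpolated back to $s_0$ near $\partial A_i$. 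A cellular computation of the $2$-chain swept by this deformation identifies $[s(C)]-[s_0(C)]$ with $\alpha$ modulo fiber classes, which yields $[s(C)]\equiv h\pmod F$.

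The hardest step is the twist construction near the singular fibers. One must verify that $\alpha$ admits a representative supported in $C\setminus S$ (using that $S$ is finite and cycles may be perturbed), that the fiberwise translations extend continuously across $\partial A_i$ and through small disks around points of $S$, and that the resulting class in the graded piece of the Leray filtration is genuinely $\alpha$ rather than its image altered by monodromy around $S$. Assumption $(2)$ that $\pi_1(X_t)$ is abelian is essential throughout: it makes the fiberwise translations commute, guarantees $\mathcal{H}_1$ is an honest local system on $C\setminus S$, and underlies the obstruction-theoretic production of $s_0$ in the first place.
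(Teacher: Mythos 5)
Your overall strategy runs parallel to the paper's: produce a section over the punctured base $C\setminus S$, use the fact that sections over a $1$-complex are classified up to fiberwise homotopy by cochains valued in $\pi_1(X_t)$ (so every class in $H_1(C\setminus S,H_1(X_t))$ compatible with degree $1$ is attainable), and then extend across the singular fibers. Your twisting computation and your treatment of the $S=\emptyset$ case via the transgression $d_2\colon H_2(C;\mathbf{Z})\to H_0(C;H_1(X_t))$ are consistent with the paper's remarks, although "fiberwise translation" tacitly assumes the fiber is a group; in the stated generality one should phrase this as a difference cochain, which is what abelianness of $\pi_1(X_t)$ licenses.

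The genuine gap is the extension over the points of $S$, which you dispatch twice without an argument: first with "extend over the points of $S$ by properness of $p$," and again at the end, where you list what "one must verify" near the singular fibers without verifying it. Properness gives nothing here: a section over a punctured disk extends across the puncture (even merely as a $2$-cycle closing up inside $p^{-1}(D)$) only if its restriction to a small circle --- a loop $e_h$ in the boundary $\partial U(X_i)$ of a neighborhood of the singular fiber --- dies in $\pi_1(U(X_i))$, and this is precisely where the paper does its real work and where hypotheses (2) and (3) enter. In the paper's proof, the fact that $h$ lifts to $H_2(X,\mathbf{Z})$ forces its boundary in $H_1(X_S)$ to vanish, so $e_h$ is null-homologous in $U(X_i)$; then, since the kernel of the surjection $\pi_1(\partial U(X_i))\to\pi_1(U(X_i))$ must contain an element projecting to the generator $e_i$ of the base circle, that image is generated by the image of the abelian group $\pi_1(X_t)$, hence is abelian, so null-homologous implies null-homotopic and the section caps off. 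Without some version of this argument neither your initial $s_0$ nor your twisted section is known to exist globally, and indeed the statement would fail without it (for instance, nothing in "properness" prevents the monodromy or a multiple fiber from obstructing the extension; it is exactly hypothesis (3) plus the homological boundary condition that rules this out).
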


\begin{proof} Consider the preimage of $S$ in $X$ and denote it by
$X_S$. Note that we have exact homology sequence:

$$ H_2(X_S , \mathbf{Z})\to H_2 (X,\mathbf{Z}) \to H_2( X\setminus X_S,\mathbf{Z})\to H_1(X_S)\to H_1(X) $$

  We obtain an embedding of $H_2(X,\mathbf{Z})/F$ to a subgroup of $H_2(X/X_S,\mathbf{Z})$
  which is a kernel of the boundary $H_2(X/X_S,\mathbf{Z}) \to H_1(X_S)$.
  In particular any element in $H_2(X,\mathbf{Z})/F$ can be realized by a two
  dimensional cycle $R$ in $X$ with degree intersection one with arbitrary
  fiber $X_t$.

   Consider the corresponding homology element in $H_2(X\setminus X_S,\mathbf{Z})=
H_2(X/X_S, \mathbf{Z})$.
   Since it is a smooth fibration we have $H_1(C \setminus S, H_1(X_t, \mathbf{Z})) \oplus \mathbf{Z}$ equal 
to the
   space $H_2(X,\mathbf{Z})/F$ where projection to $\mathbf{Z}$ corresponds to the image in
$H_2(C, \mathbf{Z}) = \mathbf{Z}$.

  Consider the case when $\pi_1(C_S)$ is a free group.
  The group $H_1( C \setminus S, H^1(X_t,\mathbf{Z})$ is equal to the
  the quotient of
  $H_2(\pi_1( X\setminus S,\mathbf{Z})$ by the image of subgroup $H_2(\pi_1(X_t),\mathbf{Z})$.
  Every homomorphism of free group $\pi_1(C\setminus S)\to \pi_1(X\setminus X_S)$
  corresponds to homotopy class of sections in $X\setminus X_S$ over $C\setminus S$.
  Two such sections are homologous 
   if they define the same
  map $H_1(C\setminus S)$ to $H_1(X\setminus X_S)$. Note that the latter
  is exactly equal to $H_1( C\setminus S, H_1(X_t,\mathbf{Z})$.
  In particular any element $h'$ in $H_2( X\setminus X_S, \mathbf{Z})$
  with intersection index $(h', X_t)= 1$ is represented
   a section $\sec(h')$ over $X \setminus X_S$.

Now the homology element $h'\in H_2(X\setminus X_S, \mathbf{Z})$ extends
  to a homology element in $X$ iff the boundary  $dh' = 0$ in $H_1(X_S,\mathbf{Z})$.
  Consider a section $s(h')$ representing $h'$ over $C\setminus S$ and
  small neighborhoods $U(X_i)$ of  fibers $X_i$ over points $s_i\in S$.
  If $dh'= 0$ then the section $\sec(h')$ representing $h'$ intersects the
boundary $dU(X_i)$
  of $U(X_i)$ by a circle $S_i^1$ which homologous to $0$ in $H_1(U (X_i))$.

The boundary $dU(X_i)$ is fibration with fiber $X_t$ over a circle around
$s_i$ in the base.
Thus $\pi_1(dU(X_i)$ is an extension
$ 1\to \pi_1(X_t)\to \pi_1(dU_i)\to \mathbf{Z}e_i$ and  $\pi_1(S_i^1)= \mathbf{Z}e_h$ maps
isomorphically to $\mathbf{Z}e_i$.
Natural embedding induces a surjective map of $\pi_1(dU_i)\to \pi_1(U_i)$

Let us show that the image of $e_h$ is not only homologous to $0$ in
$H_1(U (X_i))$ but
in fact homotopic to zero in  $\pi_1(U (X_i))$.
The group  $\pi_1(dU_i)$ is generated by $\pi_1(X_t)$ and $e_h$. Since by
assumption
$\pi_1(dU_i)$ is abelian $e_h$ maps in the commutator subgroup of
$\pi_1(U_i)$. Consider the kernel $Ker : \pi_1(dU_i)\to pi_1(U_i)$.
It has to be non-trivial and contain an element $k$ projecting to $e_i$
since
otherwise the image of $e_h$ is not homologous to zero in $H_1(U (X_i))$
But any such $k$ and $\pi_1(X_t)$ generate $\pi_1(dU_i)$.
Thus the image of $\pi_1(dU_i)$ in $\pi_1(U_i)$ and hence  $\pi_1(U_i)$
is abelian group. Hence  $e_h$ is homotopic to zero in $\pi_1(U_i)$
and this homotopy extends $s(h')$ into a global section.
If $S = \emptyset$ then image of $H_2(C,\mathbf{Z})$  is primitive, and we obtain $H_1(C \setminus S, H_1(X_t, \mathbf{Z})) \simeq H^2(X,\mathbf{Z})$, that gives an embedding.

In a case of torus we have a group $\mathbf{Z}+\mathbf{Z}$
with $H^2(\mathbf{Z}+\mathbf{Z})$ embedding into $H^2(\pi_1(X)$. Hence the argument
above applies.



Note that if $C \setminus S = S^2$ then $H_2(X,\mathbf{Z})/F= Z$ and the class $h$ as a
above
  is realized as a section over $S^2 \setminus s$ and
  the obstruction to extend it to the section over $S^2$ is trivial if
$H_1(X_t)= 0$.
  If the latter is non-trivial then the obstruction to the extension of it
over $S^2$     corresponds to non-primitivity
  of the image of the $H^2(S^2,\mathbf{Z})$ in $H^2(X,\mathbf{Z})$ which is trivial by
assumption on $p$.
  Namely the existence of $h$ intesecting $X_t$ with index one.
  \end{proof}

\

Here we discuss existence of the section in this case and another way to proof Theorem \ref{main}.

\begin{lemma} \label{rat-section} Let us assume that an abelian fibration $A$ over normal base $B $ of arbitrary dimension has two multisections
 $s_1,s_2$
 of coprime degree $m,n$, then there is rational section.
 \end{lemma}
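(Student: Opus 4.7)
The plan is to reduce to the generic fiber and to exploit the torsor structure there. Let $\eta$ be the generic point of $B$, $K=\kappa(\eta)$ its function field, and $A_\eta$ the generic fiber of $A\to B$. By hypothesis $A_\eta$ is a principal homogeneous space (torsor) under an abelian variety $J_\eta$ over $K$, with no \emph{a priori} $K$-rational zero. A multisection $s_i$ of degree $d_i$ restricts over $\eta$ to a closed point of $A_\eta$ whose residue field has degree $d_i$ over $K$, equivalently to a $G$-orbit of $d_i$ points in $A_\eta(\bar K)$ under $G=\gal(\bar K/K)$. Denote the orbits coming from $s_1,s_2$ by $\{P_1,\dots,P_m\}$ and $\{Q_1,\dots,Q_n\}$.

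The key technical input is that on any $J$-torsor $A$ over an abelian variety $J$, the integer combination $\sum_i a_iR_i$ is well-defined as a point of $A$ whenever $\sum_i a_i=1$. To see this, pick any base point $O\in A(\bar K)$, which supplies an isomorphism $A_{\bar K}\to J_{\bar K}$ via $R\mapsto R-O$, and set $\sum_i a_iR_i:=O+\sum_i a_i(R_i-O)$, the inner sum being computed in $J_{\bar K}$. The short calculation $\sum_i a_i(R_i-O)=\sum_i a_i(R_i-O')+(O'-O)$ (using $\sum_i a_i=1$) shows the result is independent of $O$, and the construction is manifestly Galois equivariant in the $R_i$'s.

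With this in hand, use Bezout to write $1=am+bn$ and set
$$R\;:=\;a\sum_{i=1}^{m}P_i\;+\;b\sum_{j=1}^{n}Q_j\;\in\;A_\eta(\bar K).$$
The total coefficient is $am+bn=1$, so $R$ is well-defined, and since $G$ permutes the sets $\{P_i\}$ and $\{Q_j\}$ internally, $R$ is $G$-invariant, hence $K$-rational: $R\in A_\eta(K)$. Spreading $R$ out over a dense open of $B$ produces the desired rational section $B\dashrightarrow A$.

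The main obstacle is mostly bookkeeping: one should shrink $B$ to the open locus over which $A$ is smooth and each $s_i$ is finite and flat, so that the degree of a multisection is well-defined and the spread-out of $R$ is unambiguous; this is routine and independent of $\dim B$. Conceptually the argument is saying that the Weil--Ch\^atelet class $[A_\eta]\in H^1(G,J_\eta(\bar K))$ is annihilated by $m$ (via $s_1$) and by $n$ (via $s_2$), hence by $\gcd(m,n)=1$; the affine-combination formulation above simply renders the Bezout step explicit without invoking group cohomology.
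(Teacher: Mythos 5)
Your proof is correct and is essentially the paper's argument: both use Bezout coefficients for the coprime degrees $m,n$ to combine the two multisections into a canonical point of each fiber (the paper phrases this as a fiberwise birational map $f\colon A\to A_0$ to the associated degree-zero group scheme, whose zero section pulls back to the rational section, while you construct the Galois-invariant affine combination directly on the generic fiber). Your generic-fiber formulation is a cleaner rendering of the same idea and, as a bonus, avoids the coefficient slip in the paper's displayed cycle $x-ns_1+ms_2$, whose degree as written is $1-nm+mn=1$ rather than $0$.
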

 
\begin{proof}
Let $a,b$ be integer numbers  with $an+bm = 1$.
Let $A_0$ be associated group scheme of cycles of degree
$0$ over $B$.

 Consider a  map $f: A\to A_0$ with $f(x) =  x - ns_1 + ms_2 $
 which associates to any point in the fiber a point
 in $x\in A_0$ corresponding to cycle given by the sum
 $x$ minus the cycle obtained from the intersection
 of fiber containing $x$ and its intersections with $s_1$ and $s_2$.
 The corresponding cycle has degree $0$ on every fiber and induces
 a fiberwise isomorphism  from $A\to A_0$ which is birational.
 Hence $A$ has a rational section.
\end{proof} 

 We can now prove Hodge type statement for some cycles in $A$
 when $A$ is projective over base.

Recall the following well-known fact:

Consider an abelian fibration $A$ over normal projective base $B$ and the group of sections. Then the latter is the product of $A_0 \times \Tors \times \mathbf{Z}^r$, where $A_0$ is a constant part of $A$ namely $A$ has the unique representation as $A' \times A_0$, and the group of sections of $A'$ is a $\Tors \times \mathbf{Z}^r$. Moreover, on $\mathbf{Z}^r$ there is non-degenerate positive quadratic form defined by polarization (Tate-pairing). Thus, it is defined on topological classes, and, in particular, $\mathbf{Z}^r$ embedds into the homology of $A'$ (and $A$). Hence we have at most  finite set of sections
which belong to the same homology class in $A'$.

\begin{lemma} \label{top-cycle}  Assume that  $B$ is normal and there is exist a topological
 $(p,p)$-cycle $R$ over $B \setminus S$ which intersects generic fiber $A$ by degree $1$
 Then there is a rational section over $B$ which is contained class $R$.
\end{lemma}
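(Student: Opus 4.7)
The plan is to reduce to the case where the base is a curve (for which the previous topological section lemma applies), then bootstrap the resulting topological section to a rational algebraic section using the $(p,p)$-condition, and finally assemble the pieces using the rigidity of the Mordell--Weil group recalled in the paragraph preceding the lemma.

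First, I would take a Lefschetz pencil $\{C_\lambda\}_{\lambda\in \mathbf{P}^1}$ of sufficiently ample curves in $B$. For a generic member $C$, the restricted fibration $p_C\colon A_C \to C$, with $A_C:=p^{-1}(C)$, is an abelian fibration over a smooth projective curve whose generic fiber has abelian fundamental group, so it satisfies the hypotheses of the preceding topological section lemma. The restricted class $[R\cap A_C]\in H_2(A_C,\mathbf{Z})$ has fiber degree $1$ over $C$, hence that lemma produces a topological section $\sigma_C^{\mathrm{top}}$ homologous to $R\cap A_C$.

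Next, I would upgrade $\sigma_C^{\mathrm{top}}$ to a rational algebraic section. Because $R$ is of Hodge type $(p,p)$ on $A$ and $A_C$ is an iterated smooth hyperplane section, the restricted class $[R\cap A_C]$ lies in $H^{p,p}(A_C,\mathbf{Z})$; cutting $A_C$ further by hyperplane sections and invoking the Lefschetz $(1,1)$ theorem on the resulting surface lifts the class to an algebraic cycle on $A_C$. This cycle is an algebraic multisection of fiber degree $1$ over $C$, equivalently a rational section $\sigma_C\colon C \dashrightarrow A_C$ with $[\sigma_C]=[R\cap A_C]$; alternatively one applies Lemma \ref{rat-section} to the degree-one multisection directly.

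Finally, I would glue the sections $\sigma_C$ into a rational section of $p$. By the paragraph preceding the lemma, two rational sections of $p_C$ in the same Mordell--Weil class differ by a torsion element, and the positivity of the Tate pairing makes this torsion ambiguity rigid as $C$ varies in the simply connected pencil base $\mathbf{P}^1$. A coherent choice of $\sigma_{C_\lambda}$ therefore exists across the pencil, and the union is a rational section of $p$ over a dense open subset of $B$; normality of $B$ extends it across the base locus of the pencil, yielding the desired rational section $\sigma\colon B\dashrightarrow A$ with $[\sigma]=[R]$.

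The main obstacle is the second step: turning a topological $(p,p)$-cycle into an algebraic one in codimension greater than one is, in full generality, the Hodge conjecture. My proposal sidesteps it via iterated hyperplane sections of $A_C$ together with Lefschetz $(1,1)$, but an arguably more natural route is to exploit the special structure of $R$: on each abelian fiber $A_t$ the restriction $R|_{A_t}$ is a $0$-cycle of degree $1$, so the fiberwise Albanese map $t\mapsto \mathrm{Alb}_t(R|_{A_t})$ should define a meromorphic section of $p$ directly, producing $\sigma$ without appealing to any Hodge-type conjecture. Either route then concludes by invoking the torsion finiteness of Mordell--Weil classes to identify the class of $\sigma$ with $[R]$.
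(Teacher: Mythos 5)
Your skeleton coincides with the paper's: restrict the fibration to ample curves $C\subset B$, produce a rational section of $A_C\to C$ in the restricted class $R_C=[R\cap A_C]$ of fiber degree one (via Lemma \ref{rat-section}), and then glue the curve-wise sections using the finiteness of the set of sections in a fixed class together with the rationality of the parameter space of curves. The execution differs in both key steps, in ways worth recording. For the section over a single curve, the paper does not pass through the topological-section lemma at all; your first step is in fact redundant in your own argument, since the Lefschetz upgrade acts on the class $R_C$ directly and never uses $\sigma_C^{\mathrm{top}}$. Your explicit treatment of algebraicity is more careful than the paper's, which leaves it implicit; for curve classes it does follow (with rational coefficients) from hard Lefschetz combined with Lefschetz $(1,1)$, though note that cutting $A_C$ down to a surface, as you propose, produces a divisor class there and does not by itself lift back to a $1$-cycle on $A_C$ in the class $R_C$ --- the hard Lefschetz route is the correct one. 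For the gluing, the paper uses not a pencil but the family of all complete-intersection curves through a fixed point $b$ with smooth fiber $A_b$; its decisive mechanism is that the sections in class $R_C$ form a finite set, hence a finite covering of any rational curve $P$ in the parameter space, which splits, and the induced morphism $P\to A_b$ is constant because $A_b$ is abelian --- this is what forces all the curve-wise sections through a single point of $A_b$ and assembles them into a cycle in the global class $R$. Your appeal to ``the positivity of the Tate pairing makes this torsion ambiguity rigid'' does not supply that mechanism; as written it is an assertion rather than an argument, and this is the one genuine gap in your proposal. It is repaired by exactly the paper's device: the finite covering over the simply connected pencil base splits, and the resulting map from $\mathbf{P}^1$ to the abelian fiber over a base point of the pencil is constant. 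Finally, your fiberwise Albanese suggestion is genuinely different from anything in the paper and would be the cleanest route, but it presupposes that $R$ restricts to an algebraic $0$-cycle on each fiber, which is precisely what must be proved for a merely topological $(p,p)$-class.
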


\begin{proof}
 Let $C_t$ be a class of curves which are complete intersections
  of big degrees in $B$. Then the restriction of $A$ over $C$ has a rational
  section in the intersection of $R$ with a  restriction $A_C$ of the fibration over $C$
  since the intersection of $R$ with $A_C$ is a class of curve and
  hence be previous \ref{rat-section}  we have a section in class $R_C$.

Note that all the  sections over $C$ form an abelian group with
 non-trivial height function which is a quadratic form.
 The sections $s_R$ in class $R_C$ have a finite order difference
 (we assume that the family on $C$ is non-constant).
 Consider a point $b$ with a smooth abelian fiber and all
 $C$ passing through $b$. The set of all such curves in one class
 $[C]$ forms a rational manifold $M_C$. Moreover
 any two points in $M_C$ can be connected by rational
 curves $P$ isomorphic to $C$ or $P^1$ which do not intersect
 singular subset of $B$. Any such $P$ defines a family
 of cycles $s_R(p),p\in P$ which intersect the fiber $A_b$
 at the same point.
 
 Indeed, the family of cycles $s_R$ in
 over curves parametrizing by $p$ is finite covering of
 $P$ and hence splits. Thus we have
 a rational map $P\to A_b$ which maps $P$
 to one points an hence all
  such cycles pass through the same
point in $A_b$ for any $b$ and form a cycle in the global class
$R$ over $B$.
\end{proof}

Now, over the curve we have two multisections so by \ref{rat-section} we can construct rational section by fiberwise mapping on itself. Then we can apply \ref{top-cycle} to the our case and construct topological section if we consider intersection of circle with the preimage of a general fiber. We can complete our cycle by \ref{top-cycle}.

From the \cite[Theorem 3]{KV} easily follows the following

\begin{corollary}
Let $M$ be a fixed compact manifold of complex dimension $4$ and 
$b_2(M) \geq 7$. Then there are 
only finitely many deformation types of hyperk\"ahler 
Lagrangian fibrations.
\end{corollary}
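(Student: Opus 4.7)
The plan is to combine Theorem \ref{main} with the conditional finiteness statement \cite[Theorem 3]{KV}. Kamenova and Verbitsky show that, for a fixed compact IHS manifold $M$ with $b_2(M) \geq 7$, there are only finitely many deformation types of Lagrangian fibrations $\pi : M' \to B$ on deformations $M'$ of $M$ \emph{under the assumption that the base $B$ is isomorphic to projective space}. The underlying mechanism is that such a fibration is encoded by the primitive nef isotropic class $L = \pi^{*}\mathcal{O}_B(1) \in H^2(M',\mathbf{Z})$ with respect to the Beauville--Bogomolov--Fujiki form, and when $b_2(M)\geq 7$ these classes fall into finitely many monodromy orbits; each orbit is a single deformation type.

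The role of Theorem \ref{main} is then to eliminate the hypothesis on the base in complex dimension $4$. First I would note that any Lagrangian fibration from a projective IHS fourfold deformation equivalent to $M$ has base $\mathbf{CP}^{2}$, directly by Theorem \ref{main}. Consequently the hypothesis of \cite[Theorem 3]{KV} is automatically satisfied in the projective case, and its conclusion produces the finiteness of deformation types of Lagrangian fibrations.

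The main obstacle is to handle the non-projective members of the deformation family: Theorem \ref{main} is stated in the projective setting, whereas the corollary allows $M$ to be an arbitrary compact hyperk\"ahler fourfold. The idea I would use here is that, given a Lagrangian fibration $\pi: M' \to B$ on a non-projective deformation $M'$, the class $[\pi^{*}\omega_B]$ is a $(1,1)$-isotropic class with respect to the BBF form, and one may deform $M'$ within the moduli of marked hyperk\"ahler structures that keep this class of type $(1,1)$ until a projective member is reached. By Matsushita's theorem the Lagrangian fibration propagates along such a deformation, so Theorem \ref{main} applies to the projective point and forces $B \cong \mathbf{CP}^{2}$. Together with \cite[Theorem 3]{KV} this yields the required finiteness statement.
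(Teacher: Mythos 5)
Your proposal is correct and follows essentially the same route as the paper, which derives the corollary by combining \cite[Theorem 3]{KV} with Theorem \ref{main}; the paper itself offers no argument beyond the bare citation, so your write-up (including the reduction of the non-projective case to the projective one via a deformation preserving the $(1,1)$-type of the isotropic class, where Matsushita's deformation result propagates the fibration) simply supplies the details the authors leave implicit.
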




Note that similar argument can be applied to
higher dimensional projective complex compact
symplectic manifolds with lagrangian fibration
$p: X\to B$ using here and further notations
similar for the $4$-dimensional case considered
above.

In particular if the top dimensional
singularity strata has codimension $2$
in $B$ then its generic transveral-two dimensional
local section $U(b_0)$  has also a singularity
of type $\mathbf{C}^2/G$ where $G$ acts freely
on $\mathbf{C}^2 \setminus 0$.  Thus in projective case
we obtain free action of $G$ on the preimage $Y_0$
of the fiber $X_0$ over $b_0$ after a covering
$D^2 \to D^2/G= U(b_0)$ Since the fiber is not
$Y_0$ is not multiple we obtain that
the action of $G$ on the space of invariant
tangent fields to $Y_0^1$ is a sum of a trivial
action on $\mathbf{C}^{n-2}$ and the above action on $\mathbf{C}^2$.
It implies a decompostion of $Y_0^1$ after finite
  isogeny into a product of and abelian variety
  $A^{n-2}$ with trivial $G$ action  and a
  stable two-dimensional degneration of
  two dimensional abelian variety $Y_2$ with free $G$-action.
  We can then apply previous analysis to $Y_2$
  but unfortunately we can not get the same
  restriction on the size of the finite group
  of components as in case when $\dim X=4$.

  However if the latter is small we can obtain
  the result similar to the case of $\dim X=4$.

  In case of top singularity having codimension $ >2$
  we can apply similar arguments to a larger class
  of groups which appear as fundamental groups
  of $G= \pi_1(U(b_0)\ b_0)$. This class of groups
  is still rather restrictive since every abelian
  subgroup of $G$ has to be cyclic.
  We plan to consider these questions in further
  study.

\
      
      \noindent
\textbf{Acknowledgements.}
Fedor Bogomolov and Nikon Kurnosov acknowledge that the article
was prepared within the framework of a subsidy granted to the HSE by
the Government of the Russian Federation for the implementation of
the Global Competitiveness Program. The second author was partially
supported by RFBR grant 18-31-00146$\setminus$18, Simons travel grant. The first author was partially
supported by EPSRC programme grant EP/M024830, Simons Fellowship
and Simons travel grant.\\


\begin{thebibliography}{99}

\bibitem[B]{B}  
F. Bogomolov, {\em On the decomposition of 
K\"ahler manifolds with trivial canonical class}, Math. USSR-Sb.
{\bf 22}, 1974, pp. 580 -- 583.

\bibitem[BM]{BM}
A. Bayer, E. Macri, \textit{MMP for moduli of sheaves on K3's via wall-crossing: nef and movable cones,
lagrangian fibrations}, Invent. math., 2013, pp. 1--86.

\bibitem[Hwa]{Hwa}
J.-M. Hwang, \textit{Base manifolds for fibrations of projective irreducible symplectic manifolds}, Invent. math., \textbf{174} (2008), no. 3, 625--644.

\bibitem[HO]{HO} J.-Muk Hwang, K. Oguiso, \textit{Multiple fibers of holomorphic Lagrangian fibrations}, Communications in Contemporary Mathematics, \textbf{13}, No. 2 (2011), pp. 309--329.

\bibitem[HO1]{HO1} J.-M. Hwang, K. Oguiso, \textit{Characteristic foliation on
the discriminant hypersurface of
a holomorphic Lagrangian fibration}, Amer. J. Math.
\textbf{131}
(2009), pp. 981--100.

\bibitem[KV]{KV} L. Kamenova, M. Verbitsky, \textit{Families of Lagrangian fibrations on hyperk\"ahler manifolds}, Adv. in Math., \textbf{260} (2014), pp. 401--413.


\bibitem[Mar]{Mar}
E. Markman, \textit{Lagrangian fibrations of holomorphic-symplectic varieties of K3$ˆ{[n]}$-type}, Proceedings of the conference Algebraic and Complex Geometry, Hannover 2012, also in \href{https://arxiv.org/pdf/1301.6584.pdf}{https://arxiv.org/pdf/1301.6584.pdf}.

\bibitem[Ma1]{Ma1} 
D. Matsushita, {\em On fibre space structures of a
  projective irreducible symplectic manifold},
alg-geom/9709033, math.AG/9903045, also in Topology
{\bf 38} (1999), No. 1, pp. 79-83. Addendum, 
Topology {\bf 40} (2001), No. 2, pp. 431--432.

\bibitem[Ma2]{Ma2}
D. Matsushita, \textit{On isotropic divisors on irreducible symplectic manifolds}, \href{https://arxiv.org/pdf/1310.0896.pdf}{https://arxiv.org/pdf/1310.0896.pdf}.

\bibitem[Ou]{Ou}
W. Ou, \textit{Lagrangian fibrations on symplectic fourfolds}, J. Reine Angew. Math., \href{https://doi.org/10.1515/crelle-2016-0004}{doi.org/10.1515/crelle-2016-0004}.

\bibitem[Yos]{Yos}
K. Yoshioka, \textit{Bridgeland’s stability and the positive cone of the moduli spaces of stable objects on an abelian
surface},  Development of Moduli Theory -- Kyoto 2013 , Mathematical Society of Japan, Tokyo, Japan, 2016, pp. 473--537.

\end{thebibliography}
\end{document}